\documentclass[12pt]{amsart}

\usepackage[utf8]{inputenc}

\usepackage{amssymb}
\usepackage{hyperref}
\usepackage[capitalize]{cleveref}

\setlength{\textwidth}{6.5 in}
\setlength{\textheight}{9.0 in}
\hoffset=-0.75in
\voffset=-0.5in

\numberwithin{equation}{section}

\swapnumbers

\newtheorem{cor}[equation]{Corollary}
\newtheorem{lem}[equation]{Lemma}
\newtheorem{prop}[equation]{Proposition}
\newtheorem{thm}[equation]{Theorem}

\crefformat{lem}{Lemma~#2#1#3}
\Crefname{lem}{Lemma}{Lemmas}
\crefname{lem}{lemma}{lemmas}
\crefformat{cor}{Corollary~#2#1#3}
\Crefname{cor}{Corollary}{Corollaries}
\crefname{cor}{corollary}{corollaries}
\crefformat{prop}{Proposition~#2#1#3}
\Crefname{prop}{Proposition}{Propositions}
\crefname{prop}{proposition}{propositions}
\crefformat{thm}{Theorem~#2#1#3}
\Crefname{thm}{Theorem}{Theorems}
\crefname{thm}{theorem}{theorems}

\theoremstyle{definition}
\newtheorem{assumps}[equation]{Assumptions}
\newtheorem{defn}[equation]{Definition}
\newtheorem{facts}[equation]{Facts}
\newtheorem{eg}[equation]{Example}

\theoremstyle{remark}
\newtheorem*{ack}{Acknowledgments}
\newtheorem{rem}[equation]{Remark}
\newtheorem{rems}[equation]{Remarks}

\crefformat{rems}{Remark~#2#1#3}

\newcommand{\cartprod}{\mathbin{\raise0.7pt\hbox{\smaller[2]$\square$}}}
\newcommand{\iso}{\cong}
\newcommand{\skel}{\mathcal{S}}

\newcommand{\ZZ}{\mathbb{Z}}

\DeclareMathOperator{\Aut}{Aut}
\DeclareMathOperator{\Cay}{Cay}
\DeclareMathOperator{\colour}{\mathsf{colour}}

\newcommand{\pref}[1]{(\ref{#1})}
\newcommand{\fullref}[2]{\ref{#1}\pref{#1-#2}}
\newcommand{\fullcref}[2]{\cref{#1}\pref{#1-#2}}

\makeatletter
\newcommand{\noprelistbreak}{\smallskip\@nobreaktrue\nopagebreak} 
\makeatother

\hyphenpenalty=10000
\tolerance=3000
\makeatletter
\def\swappedhead#1#2#3{%
  \thmnumber{\@upn{(\@secnumfont#2)\@ifnotempty{#1}{\hskip0.4em}}}%
  \thmname{#1}%
  \thmnote{ {\the\thm@notefont(#3)}}}
\makeatother

\begin{document}

\title[Stability of Cayley graphs]{Stability of Cayley graphs on 
\\ abelian groups of odd order}

\author{Dave Witte Morris}
\address{\hskip-\parindent Department of Mathematics and Computer Science,
	University of Lethbridge, \newline 
	4401 University Drive,
	Lethbridge, Alberta, T1K~3M4, Canada}
\email{\href{mailto:Dave.Morris@uleth.ca}{Dave.Morris@uleth.ca}} 
\urladdr{\url{http://people.uleth.ca/~dave.morris/}}

\begin{abstract}
Let $X$ be a connected Cayley graph on an abelian group of odd order, such that no two distinct vertices of~$X$ have exactly the same neighbours. We show that the direct product $X \times K_2$ (also called the \emph{canonical double cover} of~$X$) has only the obvious automorphisms (namely, the ones that come from automorphisms of its factors $X$ and~$K_2$). This means that $X$ is 
``stable\rlap.\spacefactor=3000'' 
The proof is short and elementary.
The theory of direct products implies that $K_2$ can be replaced with members of a much more general family of connected graphs.
\end{abstract}

\date{\today}

\maketitle

\section{Introduction} \label{IntroSect}

The \emph{canonical bipartite double cover} \cite{CanCover} of a graph~$X$ is the bipartite graph~$BX$ with $V(BX) = V(X) \times \{0,1\}$, where
	\[ \text{$(v,0)$ is adjacent to $(w,1)$ in $BX$}
	\quad \iff \quad
	\text{$v$ is adjacent to~$w$ in~$X$} . \]
Letting $S_2$ be the symmetric group on the $2$-element set~$\{0,1\}$, it is clear that $\Aut X \times S_2$ is a subgroup of $\Aut BX$. If this subgroup happens to be all of $\Aut BX$, then it is easy to see (and well known) that $X$ must be connected, and must also be ``twin-free'' (see \cref{TwinFreeDefn} below).

B.\,Fernandez and A.\,Hujdurović \cite{FernandezHujdurovic} recently established that the converse is true when $X$ is a circulant graph of odd order. This had been conjectured by Y.-L.\,Qin, B.\,Xia, and S.\,Zhou \cite[Conj.~1.3]{QinXiaZhou}, who proved the special case where $X$ has prime order.  See the introductions of \cite{FernandezHujdurovic} and~\cite{QinXiaZhou} for additional history and motivation.

Circulant graphs are examples of ``Cayley graphs'' (see \cref{CayleyDefn} below), and both sets of authors asked whether the converse can be generalized to all Cayley graphs on abelian groups of odd order (\cite[Problem~3.3]{FernandezHujdurovic} and \cite[p.~157]{QinXiaZhou}). This note provides a short, elementary proof that the desired generalization is indeed true:

\begin{thm} \label{CayleyByK2}
If $X$ is a twin-free, connected Cayley graph on a finite abelian group of odd order, then $\Aut BX = \Aut X \times S_2$.
\end{thm}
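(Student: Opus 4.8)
The plan is to reduce the theorem to the triviality of a single ``colour‑preserving'' automorphism of $BX$ and then to exploit the abelian, odd‑order and twin‑free hypotheses directly.

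\emph{Reduction.} Since $X$ is a connected Cayley graph on a group of odd order it has no subgroup of index two, hence no surjection onto $\mathbb Z/2$, hence $X$ is non‑bipartite; so $BX$ is connected and its colour classes $A\times\{0\}$ and $A\times\{1\}$ are its unique bipartition. Thus every $\alpha\in\Aut BX$ either preserves these two classes or swaps them, and after composing with the obvious swap (which lies in $S_2$) we may assume $\alpha$ preserves them, say $\alpha(v,i)=(\alpha_i(v),i)$ with $\alpha_0,\alpha_1$ permutations of $V(X)=A$. Writing $X=\Cay(A,S)$, the statement ``$\alpha$ is an automorphism'' is exactly the condition
\[ w-v\in S \iff \alpha_1(w)-\alpha_0(v)\in S \qquad (v,w\in A), \]
and composing further with a translation of $A$ (a diagonal automorphism lying in $\Aut X$) we may assume $\alpha_0(0)=0$. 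Everything then follows from the \emph{Main Claim: $\alpha_0=\alpha_1$.} Indeed, if $\alpha_0=\alpha_1=:\phi$ the displayed equivalence says precisely that $\phi\in\Aut X$, so $\alpha$ is the diagonal automorphism of $\phi$; undoing the two normalizations shows $\Aut BX$ is generated by the diagonal copy of $\Aut X$ together with the swap, and since the swap commutes with every diagonal automorphism and meets the diagonal copy only in the identity, $\Aut BX=\Aut X\times S_2$.

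\emph{Structure of a colour‑preserving automorphism.} For fixed $v$, as $w$ ranges over $v+S$ the displayed equivalence forces $\alpha_1(w)$ to range bijectively over $\alpha_0(v)+S$; hence $\alpha_1(v+S)=\alpha_0(v)+S$, and symmetrically $\alpha_0(v+S)=\alpha_1(v)+S$. Writing $L_0=\{0\}$ and $L_{m+1}=L_m+S$ (so $L_m$ is the $m$‑fold sumset of $S$), a one‑line induction starting from $\alpha_0(0)=0$ gives, for all $k$,
\[ \alpha_0(L_{2k})=L_{2k},\quad \alpha_1(L_{2k+1})=L_{2k+1},\quad \alpha_1(L_{2k})=c+L_{2k},\quad \alpha_0(L_{2k+1})=c+L_{2k+1}, \]
where $c:=\alpha_1(0)$; moreover $\bigcup_kL_{2k}=\bigcup_kL_{2k+1}=A$ because $A$ has no quotient of order two. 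Separately, summing the relation $\alpha_1(v+s)=\alpha_0(v)+\rho_v(s)$ over $s\in S$ (where $s\mapsto\rho_v(s)$ permutes $S$) and using $\sum_{s\in S}s=0$—valid since $S=-S$ and $A$ has odd order—gives $\sum_{u\sim v}\alpha_1(u)=|S|\,\alpha_0(v)$ and likewise with the roles of $\alpha_0,\alpha_1$ exchanged; subtracting, the function $\epsilon:=\alpha_1-\alpha_0\colon A\to A$ satisfies $\sum_{u\sim v}\epsilon(u)=-|S|\,\epsilon(v)$, i.e.\ $\epsilon$ is an ``eigenvector at eigenvalue $-\deg X$'', which a connected non‑bipartite graph does not possess over $\mathbb R$.

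\emph{The crux, and the obstacle.} It remains to conclude $c=0$ and $\epsilon\equiv 0$. Over $\mathbb R$ the eigenvector relation finishes this at once; the difficulty is that $\epsilon$ is valued in $A$, a group with torsion, where the adjacency operator plus $\deg(X)$ need not be injective, so one must inject the extra rigidity. This is where twin‑freeness enters, in its equivalent form ``$v\mapsto v+S$ is injective'', equivalently ``every vertex is the unique common neighbour of its own neighbourhood''; combined with the layer identities above—which pin $\alpha_0$ to each even layer and $\alpha_1$ to each odd layer up to the single translation $c$—and with the invertibility of $2$ on $A$ (a relation $S=S+2a$ can hold only for $a=0$), this should force $c=0$ and then propagate $\alpha_0=\alpha_1$ outward through the layers. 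I expect this final step—turning the eigenvector relation, the layer structure, and twin‑freeness into an actual proof that $\epsilon\equiv 0$—to be the technical heart of the argument; everything preceding it is bookkeeping resting on the two facts that $\sum_{s\in S}s=0$ and that $A$ has no subgroup of index two.
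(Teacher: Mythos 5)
Your reduction (restrict to part-preserving automorphisms, write $\alpha(v,i)=(\alpha_i(v),i)$, normalize $\alpha_0(0)=0$, reduce everything to the Main Claim $\alpha_0=\alpha_1$) is correct, and so are the layer identities and the relation $\sum_{u\sim v}\epsilon(u)=-|S|\,\epsilon(v)$ for $\epsilon=\alpha_1-\alpha_0$. But the proposal is not a proof: the Main Claim is precisely the hard content of the theorem, and your text stops exactly where it would have to be established. As you yourself observe, the ``eigenvalue $-\deg X$'' relation has no force when $\epsilon$ takes values in a finite abelian group: the operator $f\mapsto\sum_{s\in S}f(\,\cdot\,+s)+|S|f$ can have a nontrivial kernel on $G$-valued functions (already the constant function $e$ lies in it whenever $|S|e=0$, e.g.\ $G=\ZZ_9$, $|S|=3$, $e=3$), and neither the layer structure nor twin-freeness is actually shown to exclude a nonzero $\epsilon$. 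The sentence ``this should force $c=0$ and then propagate $\alpha_0=\alpha_1$ outward through the layers'' is a hope, not an argument; no mechanism for the propagation is given, and it is not clear that one exists along these lines.

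For comparison, the paper supplies the missing rigidity by a quite different and entirely finite device. It first proves a lemma: if $G$ is abelian and $ks\neq kt$ for distinct $s,t\in S$, then every automorphism of $\Cay(G;S)$ is also an automorphism of $\Cay(G;kS)$. The proof counts walks of prime length $p$ from $v$ to $w$ modulo $p$: cyclic rotation of the step sequence partitions these walks into classes of size $p$, except for the single ``straight'' walk $v,\,s+v,\,\dots,\,ps+v$ when $w-v\in pS$. Applying this with $k=|G|+1$ (so $k\equiv1\pmod{|G|}$ and $k\equiv0\pmod2$) to $BX=\Cay\bigl(G\times\ZZ_2;\,S\times\{1\}\bigr)$ shows that any part-preserving $\varphi$ preserves $\Cay\bigl(G\times\ZZ_2;\,S\times\{0\}\bigr)$, i.e.\ that \emph{each $\alpha_i$ is itself an automorphism of $X$}. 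One then normalizes $\alpha_0=\mathrm{id}$ and twin-freeness gives $\alpha_1=\mathrm{id}$ in two lines. That intermediate statement --- each $\alpha_i\in\Aut X$ --- is exactly what your approach lacks and what your eigenvector relation cannot deliver; to complete your argument you would need to prove it (or something equally strong), at which point you would essentially be reproving the paper's walk-counting lemma.
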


\begin{rem}
A graph~$X$ is said to be \emph{stable} if $\Aut BX = \Aut X \times S_2$ \cite[p.~155]{QinXiaZhou}, so the \lcnamecref{CayleyByK2} can be rephrased as the statement that:
	\begin{quotation} \it
	Every twin-free, connected Cayley graph on a finite abelian group of odd order is stable.
	\end{quotation}
(However, the term ``stable graph'' is ambiguous, because it also has other meanings in graph theory \cite{BeckenbachBorndorfer-stable,GoddardWinter-stable}.)
\end{rem}

\begin{eg}[Hujdurović-Mitrović {[personal communication]}] \label{nonabelEg}
The word ``abelian'' cannot be deleted from the statement of the \lcnamecref{CayleyByK2}. For example, if 
 	\[ G = \langle\, a, x \mid a^3 = x^7 = 1, \, a^{-1} x a = x^2 \,\rangle  \]
is the nonabelian group of order~$21$, then $X = \Cay \bigl( G ; \{ a^{\pm1}, x^{\pm1}, (ax)^{\pm1} \} \bigr)$ is twin-free and connected, but \textsf{MAGMA} computer calculations show that $|{\Aut X}| = 42$ and $|{\Aut BX}| = 252$. 
\end{eg}

\begin{rem}[{}{\cite[Rem.~1.3]{FernandezHujdurovic}}]
For any Cayley graph~$X$ on an abelian group of odd order, the theorem makes it possible to obtain the automorphism group of~$BX$ from the automorphism group of~$X$. For simplicity, let us assume that $X$ is loopless. Then there exist integers $c,d \ge 1$, and a twin-free, connected, abelian Cayley graph~$Y$ of odd order, such that $X \iso \overline{K_c} \wr (Y \wr \overline{K_d})$. Then, by a well-known theorem of Sabidussi \cite{Sabidussi} on the automorphism group of a wreath product of graphs, the theorem implies that $\Aut BX = S_c \wr \bigl( (\Aut Y \times \ZZ_2) \wr S_d \bigr)$.
\end{rem}

The canonical bipartite double cover~$BX$ can be realized as the \emph{direct product} $X \times K_2$ (see \cref{DirectProdDefn}), and the theory of direct products (see \cref{DirectProdSect}) implies that the \lcnamecref{CayleyByK2} can be generalized by replacing $BX = X \times K_2$ with $X \times Y$, where $Y$ is any graph in a much more general family:

\begin{cor} \label{CayleyByY}
Let $X$ be a twin-free, connected Cayley graph on a finite abelian group of odd order, and let $Y$ be any twin-free, connected graph, such that either:
	\begin{enumerate} \itemsep=\smallskipamount 
	\item $Y$ is not bipartite, and\/ $|V(Y)|$ is relatively prime to\/ $|V(X)|$, 
	or
	\item $Y$ is bipartite, with bipartition $V(Y) = Y_0 \cup Y_1$, such that
		\noprelistbreak
		\begin{enumerate} \itemsep = \smallskipamount
		\item $|Y_0|$ and\/ $|Y_1|$ are relatively prime to\/ $|V(X)|$,
			and
		\item either
			$|Y_0| \neq |Y_1|$,
			or
		 	$Y$ has an automorphism that interchanges $Y_0$ and~$Y_1$.
		\end{enumerate}

	\end{enumerate}
Also assume that neither $X$ nor~$Y$ is the one-vertex trivial graph. Then 
	\[ \Aut (X \times Y) = \Aut X \times \Aut Y . \]
\end{cor}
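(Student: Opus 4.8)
\emph{Proof proposal.} The plan is to derive the statement from the theory of direct (and Cartesian) products recalled in \cref{DirectProdSect}, the main tool being the Cartesian skeleton. Since $X$, $Y$, and hence $X \times Y$ are twin-free, we have $\skel(X \times Y) = \skel(X) \cartprod \skel(Y)$, and every automorphism of $X \times Y$ is in particular an automorphism of this Cartesian skeleton. We also use that $X$ is not bipartite: it is a connected Cayley graph on a group of odd order with more than one vertex, and such a group has no subgroup of index~$2$; consequently $\skel(X)$ is connected, with $\lvert V(\skel(X))\rvert = \lvert V(X)\rvert$ (and $X \times Y$ is connected, by Weichsel's theorem). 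The inclusion $\Aut X \times \Aut Y \subseteq \Aut(X \times Y)$ is routine, so the task is to show that every $\varphi \in \Aut(X \times Y)$ has the form $\varphi(x,y) = (\alpha x, \beta y)$ with $\alpha \in \Aut X$ and $\beta \in \Aut Y$.

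I would first dispose of case~(1), where $Y$ is not bipartite. Then $\skel(Y)$ is also connected, so $\skel(X) \cartprod \skel(Y)$ is a Cartesian product of two connected graphs of relatively prime orders; in particular no $\cartprod$-prime factor of $\skel(X)$ is isomorphic to one of $\skel(Y)$, so by the Sabidussi--Vizing theory its automorphism group is $\Aut\skel(X) \times \Aut\skel(Y)$, acting coordinatewise. Hence $\varphi(x,y) = (\alpha x, \beta y)$ for some permutations $\alpha$ of $V(X)$ and $\beta$ of $V(Y)$. Because $\varphi$ is in fact an automorphism of $X \times Y$ and both $X$ and $Y$ contain an edge, fixing an edge of one factor to read off the action on the other quickly gives $\alpha \in \Aut X$ and $\beta \in \Aut Y$, finishing case~(1). (This part uses nothing about $X$ beyond being twin-free, connected, and non-bipartite.)

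The bipartite case~(2) is where I expect the real work, and it is the only place where \cref{CayleyByK2} itself — rather than general product theory — is needed. Now $\skel(Y)$ has exactly two connected components, with vertex sets $Y_0$ and $Y_1$, so $\skel(X \times Y)$ has two components $D_0 = \skel(X) \cartprod C_0$ and $D_1 = \skel(X) \cartprod C_1$, whose vertex sets are precisely the two parts $P_0 = V(X) \times Y_0$ and $P_1 = V(X) \times Y_1$ of the bipartite graph $X \times Y$. A given $\varphi$ permutes $\{D_0, D_1\}$, hence permutes $\{P_0, P_1\}$. If $\varphi$ swaps them, then $\lvert P_0\rvert = \lvert P_1\rvert$, so $\lvert Y_0\rvert = \lvert Y_1\rvert$, so part~(b) of condition~(2) provides $\sigma \in \Aut Y$ interchanging $Y_0$ and $Y_1$, and replacing $\varphi$ by $\varphi \circ (\mathrm{id}_X \times \sigma)$ reduces to the case where each $P_i$ is preserved; assume that. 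By part~(a) of condition~(2), $\lvert Y_i\rvert$ is relatively prime to $\lvert V(X)\rvert$, so the Sabidussi--Vizing argument of case~(1), applied to the connected graph $D_i = \skel(X) \cartprod C_i$, gives $\varphi(x,y) = (\alpha_i x, \beta_i y)$ for $y \in Y_i$, with $\beta_i$ a permutation of $Y_i$. Since $\varphi \in \Aut(X \times Y)$ and every edge of $Y$ joins $Y_0$ to $Y_1$, one checks that $\beta := \beta_0 \cup \beta_1 \in \Aut Y$; composing $\varphi$ with $\mathrm{id}_X \times \beta^{-1}$, we may further assume $\varphi(x,y) = (\alpha_i x, y)$ for $y \in Y_i$.

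It remains to prove $\alpha_0 = \alpha_1 \in \Aut X$, and this is the crux. Choose an edge $y_0 y_1$ of $Y$ with $y_i \in Y_i$. The subgraph of $X \times Y$ induced on $\bigl(V(X) \times \{y_0\}\bigr) \cup \bigl(V(X) \times \{y_1\}\bigr)$ is isomorphic to $BX = X \times K_2$, the two $X$-layers corresponding to the two fibres of $BX$, and $\varphi$ restricts to an automorphism of it acting as $\alpha_0$ on one fibre and $\alpha_1$ on the other — in particular preserving each fibre. But by \cref{CayleyByK2}, $\Aut BX = \Aut X \times S_2$, and the fibre-preserving elements of $\Aut X \times S_2$ are exactly the maps $(x,i) \mapsto (\gamma x, i)$ with $\gamma \in \Aut X$. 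Hence $\alpha_0 = \alpha_1 = \gamma \in \Aut X$, so $\varphi = \gamma \times \mathrm{id}_Y \in \Aut X \times \Aut Y$; unwinding the reductions shows the original $\varphi$ lies in $\Aut X \times \Aut Y$ in every case, and therefore $\Aut(X \times Y) = \Aut X \times \Aut Y$. The main obstacle is thus the bipartite case: reconciling the two "halves" $\alpha_0, \alpha_1$ of $\varphi$ across the bipartition, which is exactly what \cref{CayleyByK2} is for.
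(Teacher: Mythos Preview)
Your proposal is correct and follows essentially the same route as the paper: the paper's proof simply cites D\"orfler's theorem (\cref{AutNonbipDirProd}) for case~(1) and \cref{AutBipDirProd} for case~(2), with \cref{CayleyByK2} supplying hypothesis~\pref{AutBipDirProd-AutBx}, and your argument is an inline unpacking of exactly those two results via the Cartesian-skeleton facts of \cref{DirFacts}. The only organizational difference is that in the bipartite case you first verify $\beta = \beta_0 \cup \beta_1 \in \Aut Y$ and compose it away before invoking \cref{CayleyByK2}, whereas the paper's proof of \cref{AutBipDirProd} goes straight to the $BX$-subgraph argument to obtain $\chi_0 = \chi_1$ and only then reads off that the $Y$-component map is an automorphism.
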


\begin{rem}
In \cref{CayleyByK2}, the Cayley graph~$X$ can be allowed to have an edge-colouring that is invariant under translation by elements of~$G$ (see \fullcref{NotSimple}{colour}). However, the proof of \cref{CayleyByY} does not allow  colours on the edges.
\end{rem}

Here is an outline of the paper:
\noprelistbreak
	\begin{enumerate} \itemsep=\smallskipamount 
	\item[\S\ref{IntroSect}.] Introduction
	\item[\S\ref{LemmaSect}.] A crucial lemma
	\item[\S\ref{CommentsSect}.] Comments on the lemma (optional)
	\item[\S\ref{PfThmSect}.] Proof of the main theorem
	\item[\S\ref{DirectProdSect}.] Review of direct products (proof of \cref{CayleyByY})
	\end{enumerate}

\begin{ack}
I am grateful to A.\,Hujdurović and Đ.\,Mitrović of the University of Primorska for informing me of Example~\ref{nonabelEg}, and for kindly giving me permission to include it in this paper.
\end{ack}

\section{A crucial lemma} \label{LemmaSect}

All graphs in this paper are undirected, with no multiple edges. Loops are allowed, but they are not necessary for any of the arguments, so readers are welcome to assume that all graphs are simple. Readers at the other extreme, who want to discuss multiple edges (or edge-colourings), are referred to \cref{NotSimple} below, 
but these complications are forbidden in this \lcnamecref{LemmaSect}.

\begin{defn}[{\cite[p.~34]{GodsilRoyle}}] \label{CayleyDefn}
Let $S$ be a \emph{symmetric} subset of an abelian group~$G$. (This means that $-s \in S$, for all $s \in S$.) The corresponding \emph{Cayley graph} $\Cay(G; S)$ is the graph whose vertices are the elements of~$G$, and with an edge joining the vertices $g$ and~$h$ if and only if $g = s + h$ for some $s \in S$.
\end{defn}

We now state a simple observation that is probably already in the literature somewhere. (Although the same proof also applies to Cayley \emph{digraphs}, we state the result only for Cayley \emph{graphs}, because they are the topic of this note.)

\begin{lem} \label{Aut(kS)}
Let $\varphi$ be an automorphism of a Cayley graph\/ $\Cay(G; S)$, and let $k \in \ZZ^+$. If 
\noprelistbreak
	\begin{enumerate} \itemsep=\smallskipamount 
	\item $G$ is abelian, 
	and 
	\item \label{Aut(kS)-neq}
	$ks \neq kt$ for all $s,t \in S$, such that $s \neq t$, 
	\end{enumerate}
then $\varphi$ is an automorphism of\/ $\Cay(G; k S)$, where $k S = \{\, ks \mid s \in S \,\}$.
\end{lem}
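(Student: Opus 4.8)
The plan is to show that $\varphi$ preserves the edge relation of $\Cay(G;kS)$; applying the conclusion also to $\varphi^{-1}$ (which is again in $\Aut\Cay(G;S)$), it is enough to prove that $b-a\in kS$ implies $\varphi(b)-\varphi(a)\in kS$. Since $G$ is abelian, every translation $\tau_c\colon g\mapsto g+c$ is an automorphism of $\Cay(G;S)$, so $\tau_{-\varphi(a)}\circ\varphi\circ\tau_a$ is an automorphism of $\Cay(G;S)$ that fixes $0$ and sends $ks$ to $\varphi(b)-\varphi(a)$ (where $b-a=ks$, $s\in S$). Thus it suffices to show: for every $\psi\in\Aut\Cay(G;S)$ with $\psi(0)=0$ and every $s\in S$, one has $\psi(ks)\in kS$.

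\textbf{The path and a repackaging.}
Consider the path $0,s,2s,\dots,ks$ in $\Cay(G;S)$. Its image under $\psi$ is again a walk, so $\psi(ks)=\sigma_1+\cdots+\sigma_k$ with each $\sigma_i=\psi(is)-\psi((i-1)s)\in S$; thus $\psi(ks)$ is \emph{automatically} a sum of $k$ elements of $S$, and the real content is to promote this to $\psi(ks)=kt$ for a \emph{single} $t\in S$. A convenient reformulation: put $\rho:=\psi\circ\tau_s\circ\psi^{-1}\in\Aut\Cay(G;S)$. Then $\rho(v)-v\in S$ for every $v$ (it is a difference along an edge), $\rho$ has order $n:=\mathrm{ord}(s)$ (since $\rho^{\,n}=\psi\circ\tau_{ns}\circ\psi^{-1}=\mathrm{id}$), and $\rho^{\,k}(0)=\psi(ks)$. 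So $0,\rho(0),\dots,\rho^{\,n-1}(0)$ is a closed walk of length $n$ in $\Cay(G;S)$ whose step sequence lies in $S$ and sums to $0$, and we must show that every window-sum of $k$ consecutive steps lies in $kS$.

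\textbf{Reduction to prime $k$.}
Condition \pref{Aut(kS)-neq} for $k$ implies it for every divisor $d$ of $k$: if $ds=dt$ then $ks=(k/d)(ds)=(k/d)(dt)=kt$, forcing $s=t$. Hence I would argue by induction on $k$. If $k=pq$ with $p$ prime and $1<q<k$, then by the inductive hypothesis $\varphi\in\Aut\Cay(G;qS)$; distinct elements $qs,qt$ of $qS$ satisfy $p(qs)=ks\ne kt=p(qt)$, so condition \pref{Aut(kS)-neq} holds for the pair $(qS,p)$, and the prime case applied to $\Cay(G;qS)$ gives $\varphi\in\Aut\Cay\bigl(G;p(qS)\bigr)=\Aut\Cay(G;kS)$. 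This leaves only the case of prime $k$.

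\textbf{The main obstacle.}
The heart of the matter is the prime case: for $\psi\in\Aut\Cay(G;S)$ with $\psi(0)=0$ and a prime $k$ with $ks\ne kt$ for distinct $s,t\in S$, deduce $\psi(ks)\in kS$. Hypothesis \pref{Aut(kS)-neq} genuinely cannot be dropped — it fails, and so does the conclusion, already for $X=\Cay(\ZZ_6;\{\pm1,\pm2\})$ with $k=2$ (the transposition $(0\ 3)$ is an automorphism of $X$ but not of $\Cay(\ZZ_6;\{2,4\})$) — and it should enter precisely because it makes multiplication by $k$ injective on $S$, so that the ``direction'' $s$ is recoverable from $ks$ and constraints can be propagated around the closed walk above. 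Pinning down that combinatorial step — extending the path to the full $\langle s\rangle$-coset cycle, pushing it through $\psi$, and combining the resulting relations among the $\sigma_i$ with the injectivity of $g\mapsto kg$ on $S$ — is where the work lies; it is quite possible that a slicker argument avoids walks altogether, for instance by exploiting that both $\{\tau_c : c\in G\}$ and $\psi\{\tau_c : c\in G\}\psi^{-1}$ are regular abelian subgroups of $\Aut\Cay(G;S)$.
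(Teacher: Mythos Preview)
Your reduction to prime $k$ is exactly the induction the paper uses, and your observation that hypothesis~\pref{Aut(kS)-neq} descends to divisors is correct. But the prime case is the whole lemma, and you openly leave it unfinished: tracking the single path $0,s,2s,\ldots,ks$ through $\psi$ only tells you $\psi(ks)$ is a sum of $k$ elements of~$S$, and there is no evident way to upgrade this to $kt$ for one $t\in S$ by ``combining relations among the $\sigma_i$'' or by invoking the two regular abelian subgroups. That line of attack does not seem to close.

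The missing idea is a counting argument modulo~$p$. For $v,w\in G$, let $\#(v,w)$ be the number of walks of length~$p$ from $v$ to~$w$ in $\Cay(G;S)$; since $G$ is abelian, the cyclic group~$\ZZ_p$ acts on these walks by rotating the step sequence $(s_1,\ldots,s_p)$. Because $p$ is prime, every orbit has size~$p$ except the constant sequences $(s,s,\ldots,s)$, which occur precisely when $w-v=ps$; hypothesis~\pref{Aut(kS)-neq} says there is at most one such~$s$. Hence $\#(v,w)\not\equiv 0\pmod p$ if and only if $w-v\in pS$. Any automorphism of $\Cay(G;S)$ preserves $\#(v,w)$, so it preserves the edge set of $\Cay(G;pS)$. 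This is the step your proposal is missing; once you have it, the induction you already set up finishes the proof.
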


\begin{proof}
Write $k = p_1 p_2 \cdots p_r$, where each $p_i$ is prime, and let $k_i = p_1 p_2 \cdots p_i$ for $0 \le i \le r$. We will prove by induction on~$i$ that $\varphi$ is an automorphism of $\Cay(G, k_i S)$. 
The base case is true by assumption, since $k_0 S  = 1 S = S$.

For $v,w \in G$, let $\#(v,w)$ be the number of walks of length~$p_i$ from~$v$ to~$w$ in $\Cay(G, k_{i-1} S)$. These walks are in one-to-one correspondence with the $p_i$-tuples $(s_1,s_2,\ldots,s_{p_i})$ of elements of~$k_{i-1} S$, such that $s_1 + s_2 + \cdots + s_{p_i} = w - v$. Since $G$ is abelian, any cyclic rotation of $(s_1,s_2,\ldots,s_{p_i})$ also corresponds to a walk from~$v$ to~$w$. Therefore, the set of these walks can be partitioned into sets of cardinality~$p_i$, unless $w = p_i s + v$, for some $s \in k_{i-1} S$, in which case there is a walk of the form 
	$ v, s + v, 2s + v, \ldots, p_i s + v = w$.
(Also note that $s$ is unique, if it exists, by assumption~\pref{Aut(kS)-neq}.) Hence, we see that 
	\[ \#(v,w) \not\equiv 0 \pmod{p_i} \quad \iff \quad \text{$v$ is adjacent to~$w$ in $\Cay( G; p_i k_{i-1} S)$} . \]
Since $p_i k_{i-1} = k_i$, the desired conclusion  that $\varphi \in \Aut \Cay(G, k_i S)$ now follows from the induction hypothesis that $\varphi \in \Aut \Cay(G, k_{i-1} S)$ (and the observation that automorphisms preserve the value of the function~$\#$).
\end{proof}

\section{Comments on the lemma} \label{CommentsSect}

This \lcnamecref{CommentsSect} is optional.

\begin{rems}
Two comments on assumption~\pref{Aut(kS)-neq} of \cref{Aut(kS)}:
\noprelistbreak
\begin{enumerate}
	\item This assumption holds for all $S \subseteq G$ if and only if $\gcd \bigl( k, |G| \bigr) = 1$.
	\item This assumption can be weakened. For example, if $k$ is prime, then it suffices to assume, for each $s \in S$, that $|\{\, t \in S \mid ks = kt \,\}|$ is not divisible by~$k$.
	\end{enumerate}
\end{rems}

\begin{rem}
\Cref{Aut(kS)} may be of independent interest.  For example, it provides a short, fairly elementary proof of the known classification of edge-transitive graphs of prime order. (See the following \lcnamecref{EdgeTransitive}.) Can it also simplify the proofs of other known results \cite{JMorrisSurvey} on automorphism groups of circulant graphs?

\begin{cor}[Chao \cite{Chao}] \label{EdgeTransitive}
Let $\ZZ_p$ be the cyclic group of order~$p$, where $p$ is prime. A connected Cayley graph $X = \Cay(\ZZ_p; S)$ on~$\ZZ_p$ is edge-transitive if and only if $S$ is a coset of a subgroup of the multiplicative group~$\ZZ_p^\times$.
\end{cor}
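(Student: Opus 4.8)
The plan is to prove both directions, using Lemma~\ref{Aut(kS)} for the harder (forward) implication. First I would dispose of the easy direction: suppose $S = c H$ is a coset of a subgroup $H \le \ZZ_p^\times$. Since $X$ is a graph, $S$ is symmetric, so $-1 \in c^{-1} S = H$, which forces $S = cH$ to be closed under negation automatically. For any two edges $\{g, g+s\}$ and $\{h, h+t\}$ with $s, t \in S$, write $t = m s$ for some $m \in H$ (possible since $S$ is a single coset of~$H$). The map $x \mapsto m x + (h - m g)$ is an affine transformation of~$\ZZ_p$; multiplication by $m \in \ZZ_p^\times$ is a group automorphism and hence sends $S = cH$ to $mcH = cH = S$ (again because $m \in H$), so this affine map is a graph automorphism of~$X$, and it carries the first edge to the second. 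Hence $X$ is edge-transitive. (Connectedness plays no role here, but it is part of the hypothesis.)

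For the forward direction, suppose $X = \Cay(\ZZ_p; S)$ is connected and edge-transitive. Connectedness together with $p$ prime forces $S \neq \emptyset$; pick $s_0 \in S$, and I claim it suffices to show $s_0^{-1} S$ is a subgroup of $\ZZ_p^\times$ — equivalently, that $S$ is a coset. Fix any $s_1 \in S$. Since $X$ is edge-transitive, there is $\varphi \in \Aut X$ carrying the edge $\{0, s_0\}$ to the edge $\{0, s_1\}$; composing with the translation $x \mapsto x - \varphi(0)$, we may assume $\varphi(0) = 0$ and $\varphi(s_0) = s_1$ (if instead $\varphi(0)=s_1, \varphi(s_0)=0$, precompose with $x \mapsto s_0 - x$, which is an automorphism since $S = -S$). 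Now the key point: because $\gcd(p, |\ZZ_p|) $ is not coprime we cannot directly apply the lemma with the ambient group, so instead I would argue as follows. A vertex-transitive graph of prime order $p$ on $\ZZ_p$ that is connected has, by a classical argument (Burnside / the fact that a transitive group of prime degree is either solvable with a regular normal Sylow subgroup, or doubly transitive), automorphism group contained in the affine group $\mathrm{AGL}(1,p) = \{x \mapsto a x + b : a \in \ZZ_p^\times, b \in \ZZ_p\}$ unless $X$ is complete or empty — and in the excluded cases $S$ is all of $\ZZ_p \setminus\{0\}$ or a coset anyway. So the stabilizer of~$0$ in $\Aut X$ acts as multiplication by some subgroup $H \le \ZZ_p^\times$. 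Since this stabilizer acts transitively on the neighbours of~$0$ (edge-transitivity plus vertex-transitivity), the orbit $H \cdot s_0 = s_0 H$ equals $S$, so $S$ is a coset of the subgroup~$H$.

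The subtlety — and where I expect to actually want Lemma~\ref{Aut(kS)} rather than the Burnside-style input — is in justifying that $\Aut X \le \mathrm{AGL}(1,p)$ without invoking heavy group theory, since the point of the remark is that the lemma gives a "short, fairly elementary" proof. The elementary route: let $\varphi$ fix $0$. The lemma says that for every $k$ with $\gcd(k,p)=1$ (so condition~\pref{Aut(kS)-neq} holds automatically, as noted in the Remarks), $\varphi \in \Aut\Cay(\ZZ_p; kS)$. Choosing $k$ to range over $\ZZ_p^\times$, the sets $kS$ as $k$ varies either are all equal (iff $S$ is a union of cosets of $\mathrm{Stab}_{\ZZ_p^\times}(S)$, and connectedness plus primality will pin this down to $S$ being a single coset) or sweep out enough structure that $\varphi$ must normalize the regular subgroup; tracking the neighbourhood of~$0$ under all these $kS$ and using that $\varphi$ fixes~$0$ and preserves each $\Cay(\ZZ_p;kS)$, one deduces $\varphi$ is multiplication by a unit. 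The main obstacle is making this last deduction fully rigorous and self-contained — choosing the right $k$ (or the right combination) so that the common refinement of the edge sets of all the $\Cay(\ZZ_p; kS)$ forces $\varphi$ onto $\ZZ_p^\times$ — rather than appealing to the classical theory of permutation groups of prime degree. I would allocate most of the write-up to that step.
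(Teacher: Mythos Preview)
Your easy direction is fine and matches the paper's. For the forward direction you have correctly identified the tool --- apply \cref{Aut(kS)} for every $k \in \ZZ_p^\times$ to see that the vertex stabilizer $A_0$ preserves each set $kS$ --- but you have not found the right finishing move, and the one you are aiming for is actually too strong. You propose to deduce that every $\varphi \in A_0$ is multiplication by a unit; but for $X = K_p$ (where $S = \ZZ_p^\times$, certainly a coset) the stabilizer $A_0$ is the full symmetric group on $\ZZ_p \setminus \{0\}$, so that conclusion is false. Your Burnside-style route works only after carving out such cases, and in any event is precisely the ``heavy group theory'' the remark is advertising we can avoid.

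The paper's argument sidesteps all of this by not trying to identify $A_0$ at all. From the lemma, each $kS$ is $A_0$-invariant; hence so is $kS \cap S$. But edge-transitivity makes $A_0$ transitive on $S = N_X(0)$, so any $A_0$-invariant subset of~$S$ is either all of~$S$ or empty. Thus for every $k \in \ZZ_p^\times$ either $kS = S$ or $kS \cap S = \emptyset$: in other words, $S$ is a block of imprimitivity for the regular action of $\ZZ_p^\times$ on itself, and therefore a coset of a subgroup. That is the missing idea --- two lines once you see it, and it never asks what $A_0$ actually is.
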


\begin{proof}
($\Rightarrow$) 
First, note that $0 \notin S$, because a connected, edge-transitive graph with $p$ vertices cannot have loops. Since $p$ is prime, this implies $S \subseteq \ZZ_p^\times$.

Now, let $A_0 = \{\, \varphi \in \Aut X \mid \varphi(0) = 0 \,\}$ be the stabilizer of the vertex~$0$ in $\Aut X$. For every $k \in \ZZ_p^\times$, \cref{Aut(kS)} tells us that $k S$ is $A_0$-invariant. Since $S = 1S$ is also $A_0$-invariant, this implies that $kS \cap S$ is $A_0$-invariant. 
However, $A_0$ is transitive on~$S$ (because $S = N_X(0)$ and $X$ is edge-transitive), so this implies that either $kS = S$ or $k S = \emptyset$. Since this is true for all~$k$ (and $S \subseteq \ZZ_p^\times$), this means that $S$ is a \emph{block of imprimitivity} for the regular representation of~$\ZZ_p^\times$ \cite[p.~12]{DixonMortimer}. So $S$ is an orbit of some subgroup~$H$ of~$\ZZ_p^\times$ \cite[Thm.~1.5A, pp.~13--14]{DixonMortimer}: $S = Hz$ for some $z \in \ZZ_p^\times$. Since we are dealing with the regular representation, this means that $S$ is a coset of~$H$.

($\Leftarrow$) This is the easy direction (and does not require the assumption that $p$ is prime).
Assume $S$ is a coset of the subgroup~$H$ of~$\ZZ_p^\times$.
Note that $H$ acts by automorphisms on the group~$\ZZ_p$ (because multiplication by any $k \in \ZZ_p^\times$ is an automorphism of~$\ZZ_p$). The set~$S$ is invariant under~$H$; indeed, $H$ is transitive on~$S$ (because $S$ is a coset of~$H$). The proof is now completed by a well-known, elementary argument \cite[Lem.~2.8]{QinXiaZhou}: $H$ is a group of automorphisms of the Cayley graph~$X$ (because it is a group of automorphisms of~$\ZZ_p$ that fixes~$S$). Since $H$ fixes the vertex~$0$, and acts transitively on the set~$S$ of neighbours of~$0$, this implies that $X$ is edge-transitive.
\end{proof}
\end{rem}

\begin{rems} \label{NotSimple} 
Unlike in \cite{FernandezHujdurovic,QinXiaZhou}, we do not need to assume that $\Cay(G;S)$ is a simple graph.%
\noprelistbreak
	\begin{enumerate} \itemsep=\smallskipamount
	\item Graphs may have loops.
	\item \label{NotSimple-colour}
	We can allow edge-colourings of $\Cay(G;S)$ that are invariant under translation by elements of~$G$: the colour of an edge $(v,w)$ must be the same as the colour of the edge $(v + g, w + g)$. Such colourings come from colourings of~$S$: choose a colour for each element~$s$ of~$S$ (such that $\colour(s) = \colour(-s)$, for all $s \in S$), and then apply this colour to each edge of the form $(v, s + v)$.
Automorphisms are required to preserve the edge-colouring. 
	\item \Cref{Aut(kS)} remains valid this setting. To see this, let 
	\[ \text{$S_c = \{\, s \in S \mid \colour(s) = c \,\}$, \ for each colour~$c$.} \]
\Cref{Aut(kS)} (as stated, without considering any edge-colourings) implies that $\varphi$ is an automorphism of $\Cay(G; k S_c)$. Saying that this is true for every~$c$ is exactly the same as saying that $\varphi$ is a (colour-preserving) automorphism of $\Cay(G; kS)$, if we let 
	\[ \text{$\colour(t) = \{\, \colour(s) \mid t = ks, \ s \in S \,\}$ \  for each $t \in kS$.} \]
Also note that this proof only applies the original version of \cref{Aut(kS)} to~$S_c$, not all of~$S$, so hypothesis~\pref{Aut(kS)-neq} can be replaced with the weaker assumption that:
	\[ \text{(\ref{Aut(kS)-neq}$'$) \it for every colour~$c$, we have $ks \neq kt$ for all $s,t \in S_c$, such that $s \neq t$.} \]
	(If $s$ and~$t$ have different colours, then it is not necessary to assume $ks \neq kt$.)
	\item \label{NotSimple-multigraph}
	Technically, we do not allow graphs to have multiple edges. However, since the statements of the results only consider automorphism groups, not other graphical properties, the multiplicity of an edge can be encoded as part of its colour (or ``label''). For example, an edge coloured ``$2\mathsf{B}, 3\mathsf{R}, \mathsf{W}$'' could be thought of as representing 2~blue edges, 3~red edges, and a white edge, all with the same endpoints.
	\end{enumerate}
\end{rems}

\section{Proof of the main theorem} \label{PfThmSect}

\begin{defn}[Kotlov-Lovász {\cite{KotlovLovasz-twinfree}}] \label{TwinFreeDefn} 
A graph~$X$ is \emph{twin-free} if there do not exist two distinct vertices $v$ and~$w$, such that $N_X(v) = N_X(w)$, where $N_X(v)$ denotes the set of neighbours of~$v$ in~$X$.
\end{defn}

\begin{rem}
Synonyms for ``twin-free'' include ``irreducible'' \cite{FernandezHujdurovic}, ``$R$-thin'' \cite[p.~91]{Handbook}, and ``vertex-determining'' \cite{QinXiaZhou}.
\end{rem}

Let $S$ be a symmetric subset of a finite abelian group~$G$ of odd order, such that the Cayley graph $X = \Cay(G;S)$ is twin-free and connected. 
Given $\varphi \in \Aut BX$, we wish to show that $\varphi \in \Aut X \times S_2$.

Note that 	
	\[ BX = \Cay \bigl( G \times \ZZ_2 ; S \times \{1\} \bigr) , \]
and that $BX$ is connected and bipartite, with bipartition sets $G \times \{0\}$ and $G \times \{1\}$.
Since $\Aut X \times S_2$ contains an element that interchanges these two sets, we may assume
	\[ \text{$\varphi\bigl( G \times \{i\} \bigr)  = G \times \{i\}$ for $i = 0,1$.} \]

\begin{rem}
If the edges of~$X$ have been coloured (as allowed by \fullcref{NotSimple}{colour}):
\noprelistbreak
	\begin{enumerate} \itemsep=\smallskipamount
	\item Each element~$x$ of $N_X(v)$ is labelled with the colour of the edge from~$v$ to~$x$. Therefore, saying that $N_X(v) = N_X(w)$ means that, for each $x \in N_X(v)$, the colour of the edge joining $x$ to~$v$ is same as the colour of the edge joining $x$ to~$w$. Hence, an edge-coloured graph may be twin-free, even though its underlying uncoloured graph is not twin-free.
	\item The edges of $BX$ are coloured by colouring the edge from $(v,0)$ to $(w,1)$ with whatever colour appears on the edge from $v$ to~$w$ in~$X$.
	\end{enumerate}
\end{rem}

Let $k = |G| + 1$, so $k \equiv 1 \pmod{|G|}$ and $k \equiv 0 \pmod{2}$.
Then, for every $(s,1) \in S \times \{1\}$, we have $k (s,1) = (s,0)$, so \cref{Aut(kS)} tells us that $\varphi$ is an automorphism of the graph $\Cay \bigl( G \times \ZZ_2; S \times \{0\} \bigr)$ (which is the disjoint union of two copies of~$X$). Hence, after multiplying by an element of $\Aut X \times \{\iota\}$, where $\iota$~is the identity element of~$S_2$, we may assume that $\varphi(v) = v$ for all $v \in G \times \{0\}$.

It is now easy to complete the proof, by using the assumption that $X$ is twin-free. For all $g \in G$, we have
	\begin{align*}
	N_X(g) \times \{0\}
	&= \varphi \bigl( N_X(g) \times \{0\} \bigr)
		&& \text{($\varphi(v) = v$ for all $v \in G \times \{0\}$)}
	\\&= \varphi \bigl( N_{BX}(g,1) \bigr)
		&& \text{(definition of $BX$)}
	\\&= N_{BX}\bigl( \varphi(g,1) \bigr)
		&& \text{($\varphi$ is an automorphism)}
	\\&= N_{BX} ( g', 1)
		&& \text{(where $\varphi(g,1) = (g', 1)$)}
	\\&= N_X(g') \times \{0\}
		&& \text{(definition of $BX$)}
	. \end{align*}
Since $X$ is twin-free, this implies $g = g'$, so $\varphi(g,1) = (g',1) = (g,1)$, which means $\varphi(v) = v$ for all $v \in G \times \{1\}$. Since this equality also holds for all $v \in G \times \{0\}$, we conclude that $\varphi$ is the identity element of $\Aut BX$, and is therefore in the subgroup $\Aut X \times S_2$. 
\qed

\section{Review of direct products} \label{DirectProdSect}

\begin{defn}[{}{\cite[p.~36]{Handbook}}] \label{DirectProdDefn}
The \emph{direct product} of two graphs $X$ and~$Y$ is the graph $X \times Y$ with $V(X \times Y) = V(X) \times V(Y)$, and 
	\[ \text{$(x_1,y_1)$ is adjacent to $(x_2,y_2)$ in $X \times Y$}
	\quad \iff \quad
	\begin{matrix} \text{$x_1$ is adjacent to~$x_2$ in~$X$, and} \hfill \\ \text{$y_1$ is adjacent to~$y_2$ in~$Y$} . \end{matrix} \]
\end{defn}

\begin{rem}[{}{\cite[p.~36]{Handbook}}]
The literature has numerous other names for the direct product, including ``tensor product\rlap,'' ``Kronecker product\rlap,'' ``cardinal product\rlap,'' and ``conjunction\rlap.''
\end{rem}

The graph~$X$ in \cref{CayleyByK2} is allowed to have edge-colours and multiple edges, but the theory of automorphisms of direct products does not seem to have been developed in this generality, so:

\begin{assumps}
In this \lcnamecref{DirectProdSect} (and, therefore, in \cref{CayleyByY}), graphs do \emph{not} have edge-colours or multiple edges (but they may have loops). 
\end{assumps}

As was mentioned in \cref{IntroSect}, we have $BX = X \times K_2$.
Generalizing the comments there about $\Aut BX$, it is clear that $\Aut X \times \Aut Y$ is a subgroup of $\Aut(X \times Y)$, and that if this subgroup happens to be all of $\Aut(X \times Y)$, then $X$ and~$Y$ must be connected, and must also be twin-free.
(We ignore the situation where one of the graphs is the one-vertex trivial graph.)

For direct products of non-bipartite graphs, the converse holds if and only if a certain ``coprimality'' condition holds. However, instead of stating the full strength of this classical theorem of W.\,D\"orfler, we present only a simpler, weakened version of the result:

\begin{thm}[D\"orfler, cf.\ \cite{Dorfler} or {\cite[Thm.~8.18, p.~103]{Handbook}}] \label{AutNonbipDirProd}
Let $X$ and~$Y$ be twin-free, connected, non-bipartite graphs of relatively prime order. Then $\Aut(X \times Y) = \Aut X \times \Aut Y$.
\end{thm}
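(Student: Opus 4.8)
The plan is to move the problem off the direct product, where factorization is notoriously delicate, and onto a Cartesian product, where the automorphism group is completely understood; the bridge is the \emph{Cartesian skeleton}. For a graph~$G$, let $\skel(G)$ denote its Cartesian skeleton: the spanning subgraph of the ``two-step graph'' (in which $u$ and $v$ are joined when they have a common neighbour) obtained by deleting every \emph{dispensable} edge~$uv$, i.e.\ every edge for which $N_G(u) \cap N_G(v)$ is properly contained in $N_G(u) \cap N_G(w)$ or in $N_G(w) \cap N_G(v)$ for some vertex~$w$. Since this construction is expressed entirely in terms of the adjacency relation of~$G$, every automorphism of~$G$ is an automorphism of~$\skel(G)$, which gives a canonical homomorphism $\Aut(G) \to \Aut\bigl(\skel(G)\bigr)$.

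The technical core is the identity
	\[ \skel(X \times Y) = \skel(X) \cartprod \skel(Y) , \]
valid for twin-free graphs (\cref{TwinFreeDefn}) $X$ and~$Y$ under the obvious identification of vertex sets $V(X) \times V(Y)$, where $\cartprod$ is the Cartesian product. This is proved by unwinding the description of common neighbourhoods in a direct product (\cref{DirectProdDefn})---the common neighbours of $(x_1,y_1)$ and $(x_2,y_2)$ form the box $\bigl(N_X(x_1) \cap N_X(x_2)\bigr) \times \bigl(N_Y(y_1) \cap N_Y(y_2)\bigr)$---together with an analysis of which edges are dispensable; here one also uses that $X$ twin-free forces $X \times Y$ twin-free, and that $\skel(Z)$ is connected whenever $Z$ is connected and non-bipartite (in our setting $X$, $Y$, and hence $X\times Y$ are connected and non-bipartite). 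All of this is contained in the treatment of the Cartesian skeleton in \cite{Handbook}, and in a full write-up I would quote those lemmas rather than reprove them; this step is where twin-freeness and non-bipartiteness are genuinely used, and the dispensable-edge bookkeeping is the fiddly part.

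Now $\skel(X)$ and $\skel(Y)$ are connected graphs on $|V(X)|$ and $|V(Y)|$ vertices, with $\gcd\bigl(|V(X)|, |V(Y)|\bigr) = 1$. By the Sabidussi--Vizing unique prime factorization theorem, each factors uniquely into prime connected graphs over~$\cartprod$; since a prime factor has at least two vertices and the order of a Cartesian product is the product of the orders of its factors, a factor common to $\skel(X)$ and $\skel(Y)$ would force a common divisor greater than~$1$ of $|V(X)|$ and~$|V(Y)|$, which is impossible. Hence $\skel(X)$ and $\skel(Y)$ share no prime Cartesian factor, and the structure theorem for automorphism groups of Cartesian products of connected graphs (automorphisms are generated by coordinate-wise automorphisms of prime factors and transpositions of isomorphic prime factors) yields
	\[ \Aut\bigl( \skel(X) \cartprod \skel(Y) \bigr) = \Aut\bigl(\skel(X)\bigr) \times \Aut\bigl(\skel(Y)\bigr) . \]
In particular, every automorphism of $\skel(X) \cartprod \skel(Y)$ maps each layer $V(X) \times \{y\}$ onto a layer $V(X) \times \{y'\}$ and each layer $\{x\} \times V(Y)$ onto a layer $\{x'\} \times V(Y)$.

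To finish, let $\varphi \in \Aut(X \times Y)$. By the previous paragraphs, $\varphi$, acting on $V(X) \times V(Y)$, permutes the layers $V(X) \times \{y\}$ among themselves and the layers $\{x\} \times V(Y)$ among themselves; let $\beta(y)$ and $\alpha(x)$ record these index permutations. Since $\bigl(V(X) \times \{y\}\bigr) \cap \bigl(\{x\} \times V(Y)\bigr) = \{(x,y)\}$ and $\varphi$ respects both partitions, $\varphi(x,y) = \bigl(\alpha(x), \beta(y)\bigr)$ with $\alpha$ a permutation of~$V(X)$ and $\beta$ a permutation of~$V(Y)$. Because $Y$ is non-bipartite it has an edge $y_1y_2$; for any adjacent $x_1, x_2$ in~$X$ we get $(x_1,y_1)$ adjacent to $(x_2,y_2)$, hence $\bigl(\alpha(x_1),\beta(y_1)\bigr)$ adjacent to $\bigl(\alpha(x_2),\beta(y_2)\bigr)$, hence $\alpha(x_1)$ adjacent to $\alpha(x_2)$; applying the same reasoning to $\varphi^{-1}$, which is layer-preserving of the same form, gives the converse, so $\alpha \in \Aut X$, and symmetrically $\beta \in \Aut Y$. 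Thus $\varphi = (\alpha,\beta) \in \Aut X \times \Aut Y$, and the reverse inclusion is immediate from \cref{DirectProdDefn}. One could instead avoid the skeleton by invoking unique prime factorization of connected non-bipartite graphs over the \emph{direct} product and Dörfler's original automorphism analysis, but the route above isolates the role of the coprimality hypothesis most cleanly: it is exactly what forbids a common factor, and therefore a ``twist'' mixing the two coordinates.
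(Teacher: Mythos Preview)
The paper does not give its own proof of this theorem: it is quoted as a classical result of D\"orfler, with a citation to \cite{Dorfler} and to \cite[Thm.~8.18]{Handbook}, and is used as a black box in the proof of \cref{CayleyByY}. Your sketch is correct and is precisely the Cartesian-skeleton route recorded in the Handbook; indeed the ingredients you invoke (naturality of~$\skel$, the identity $\skel(X\times Y)=\skel X\cartprod\skel Y$ for twin-free factors, connectedness of~$\skel X$ for non-bipartite~$X$, and $\Aut(A\cartprod B)=\Aut A\times\Aut B$ for connected graphs of coprime order) are exactly the items the paper lists as Facts~\ref{DirFacts} and then uses to prove the bipartite companion \cref{AutBipDirProd}. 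So there is nothing to compare: your argument \emph{is} the standard one the paper is pointing to.

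One small wrinkle worth tightening in your final step: when you pick an edge of~$Y$ to deduce $\alpha\in\Aut X$, make sure it is an edge between two \emph{distinct} vertices (or else verify separately that $\beta$ sends looped vertices to looped vertices), since loops are allowed here; for connected non-bipartite~$Y$ with $|V(Y)|>1$ such an edge certainly exists, and the one-vertex case is trivial.
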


The situation is more complicated (and not yet understood) when one of the factors of a direct product is bipartite. However, the following facts shed some light, as will be seen in the \lcnamecref{AutBipDirProd} that follows.

\begin{facts} \label{DirFacts}
Let $X$ and~$Y$ be connected graphs.
\noprelistbreak
	\begin{enumerate} \itemsep = \smallskipamount
	\item The \emph{Cartesian skeleton} of~$X$ is a certain graph~$\skel X$ that is defined from~$X$, such that $V(\skel X) = V(X)$ \cite[Defn.~8.2, p.~95]{Handbook}.
	\item \label{DirFacts-natural}
	Every automorphism of $X \times Y$ is also an automorphism of $\skel (X \times Y)$ \cite[Prop.~8.11, p.~97]{Handbook}.
	\item \label{DirFacts-S(x)}
	If $X$ and~$Y$ are twin-free (and have more than one vertex), then $\skel (X \times Y) = \skel X \cartprod \skel Y$ \cite[Prop.~8.10, p.~96]{Handbook}, where $\cartprod$ denotes the \emph{Cartesian product} \cite[p.~35]{Handbook}.
	\item \label{DirFacts-Aut(cart)}
	If $|V(X)|$ is relatively prime to $|V(Y)|$, then $\Aut(X \cartprod Y) = \Aut X \times \Aut Y$ \cite[Cor.~6.12, p.~70]{Handbook}.
	\item \label{DirFacts-conn}
	If $X$ is not bipartite, then $\skel X$ is connected \cite[Prop.~8.13(i), p.~98]{Handbook}.
	\item \label{DirFacts-notconn}
	If $Y$ is bipartite, then $\skel Y$ has precisely two connected components, and their vertex sets are the bipartition sets of~$Y$ \cite[Prop.~8.13(ii), p.~98]{Handbook}.
	\end{enumerate}

\end{facts}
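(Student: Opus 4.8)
All six items belong to the standard structure theory of the \emph{Cartesian skeleton} \cite{Handbook}, so my plan is to recall the construction and then read each item off it. From a graph~$G$ (loops allowed) one first forms its \emph{Boolean square}, the graph on~$V(G)$ whose edges join two \emph{distinct} vertices having a common neighbour in~$G$ (a loop makes a vertex its own neighbour). One then deletes the \emph{dispensable} edges, an edge $\{x,y\}$ being dispensable when some vertex~$z$ dominates the set $N(x)\cap N(y)$ from both sides in the precise inclusion-theoretic sense of the Handbook's definition \cite{Handbook}. The surviving spanning subgraph is $\skel G$; since only edges are removed, $V(\skel G)=V(G)$, which is item~(1). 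Item~(2) is then a functoriality statement: the Boolean square and the dispensability relation are defined purely from the assignment $v\mapsto N(v)$ and from inclusions among these neighbourhoods, and every automorphism~$\varphi$ of $X\times Y$ satisfies $N\bigl(\varphi(v)\bigr)=\varphi\bigl(N(v)\bigr)$. Hence $\varphi$ preserves common neighbours and all the defining inclusions, carries the Boolean square to itself, sends dispensable edges to dispensable edges, and so restricts to an automorphism of $\skel(X\times Y)$.

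The two connectivity statements I would settle by a parity argument. Two vertices are joined in the Boolean square exactly when $G$ has a walk of length~$2$ between them, so the Boolean square links any pair of vertices joined in~$G$ by a walk of even length. If $G$ is connected and bipartite with parts $Y_0$ and~$Y_1$, then a common neighbour of $x$ and~$y$ must lie in the part opposite to both, forcing $x$ and~$y$ into the same part; thus the Boolean square has no edge across $Y_0\cup Y_1$, while even walks join any two vertices of a single part. Verifying that the removal of dispensable edges keeps each part connected then gives the two components of item~(6). If instead $G$ is connected and non-bipartite, an odd closed walk can be spliced in to adjust parity, so every pair of vertices is joined by an even walk; hence the Boolean square, and therefore $\skel G$, is connected, which is item~(5).

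The substance lies in items~(3) and~(4), and item~(3) is where I expect the real work. The computation $N_{X\times Y}(x,y)=N_X(x)\times N_Y(y)$ shows that $(x,y)$ and $(x',y')$ share a neighbour precisely when $N_X(x)\cap N_X(x')$ and $N_Y(y)\cap N_Y(y')$ are both nonempty, i.e.\ when in each coordinate the two entries are equal or adjacent in the corresponding Boolean square; so the Boolean square of $X\times Y$ is the strong product $\boxtimes$ of those of the two factors. The crux, and the main obstacle, is the dispensability bookkeeping: the twin-free hypothesis makes $(x,y)\mapsto N_X(x)\times N_Y(y)$ injective, so inclusions in the product faithfully reflect inclusions in the factors, and with this one must show that the dispensable edges of the strong product are exactly the ``diagonal'' edges (those moving both coordinates) together with the edges lying over a dispensable edge of a single factor. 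What remains is then precisely $\skel X \cartprod \skel Y$, giving item~(3). Finally, item~(4) comes from Cartesian-product factorization theory: coprimality of $|V(X)|$ and~$|V(Y)|$ forces $X$ and~$Y$ to share no common Cartesian factor, since such a factor would have order dividing both, and for relatively prime graphs the automorphism group of the product splits as $\Aut X \times \Aut Y$, no automorphism being able to interchange a factor of~$X$ with one of~$Y$.
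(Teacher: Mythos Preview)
The paper does not prove this statement at all: the six items are stated as \emph{Facts}, each with a citation to the relevant definition or proposition in the Handbook of Product Graphs, and are then used as black boxes in the proof of the subsequent proposition. There is no argument in the paper to compare your proposal against.

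Your proposal therefore goes considerably beyond what the paper does, and the sketches you give are broadly correct and in line with the Handbook's treatment. That said, a few of the harder steps are not fully closed. In items~(5) and~(6) you correctly analyse the connectivity of the Boolean square, but the passage from the Boolean square to $\skel G$ requires showing that deleting the dispensable edges does not destroy connectivity (globally in the non-bipartite case, within each bipartition class in the bipartite case); you flag this for~(6) but simply assert ``and therefore $\skel G$'' for~(5), and in neither case do you supply the argument. For item~(3) you honestly identify the dispensability bookkeeping as the crux and give only an outline; completing it is exactly the content of the cited Proposition~8.10, and is not trivial. For item~(4) your reasoning is fine under the standing connectedness hypothesis, though you are implicitly invoking the unique prime factorisation of connected graphs under~$\cartprod$, itself a substantial theorem.
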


The following straightforward consequence of these facts is presumably known to experts, but we do not have a reference. 
The gist is that, in order to understand the automorphism group of $X \times Y$, where $Y$ is bipartite, it often suffices to understand the special case where $Y = K_2$. 

\begin{prop} \label{AutBipDirProd}
Let $X$ and~$Y$ be twin-free, connected graphs that have at least one edge, such that:
\noprelistbreak
	\begin{enumerate} \itemsep = \smallskipamount
	\item $X$ is not bipartite, 
	\item $Y$ is bipartite, with bipartition $V(Y) = Y_0 \cup Y_1$, such that
		\noprelistbreak
		\begin{enumerate} \itemsep = \smallskipamount
		\item  \label{AutBipDirProd-bip-relprime}
			$|Y_0|$ and\/ $|Y_1|$ are relatively prime to $|V(X)|$,
			and
		\item \label{AutBipDirProd-bip-interchange}
			either
		 	$Y$ has an automorphism that interchanges $Y_0$ and~$Y_1$,
			or\/
			$|Y_0| \neq |Y_1|$,
	and 
		\end{enumerate}
	\item \label{AutBipDirProd-AutBx}
	$\Aut BX = \Aut X \times S_2$.
	\end{enumerate}
Then $\Aut(X \times Y) = \Aut X \times \Aut Y$.
\end{prop}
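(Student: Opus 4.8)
The plan is to use the Cartesian skeleton to expose the coordinate-wise structure of an arbitrary $\varphi \in \Aut(X \times Y)$, and then to route the interaction between the two halves of~$Y$ through the hypothesis $\Aut BX = \Aut X \times S_2$. First, we may assume $X$ has more than one vertex, since otherwise $X \times Y \iso Y$ and the conclusion is immediate; note that $Y$ automatically has more than one vertex, being bipartite with an edge. Then, by Facts~\fullref{DirFacts}{natural} and~\fullref{DirFacts}{S(x)}, every $\varphi \in \Aut(X \times Y)$ is an automorphism of $\skel(X \times Y) = \skel X \cartprod \skel Y$. By Fact~\fullref{DirFacts}{conn}, $\skel X$ is connected, and by Fact~\fullref{DirFacts}{notconn}, $\skel Y$ is a disjoint union of two connected graphs $D_0$ and~$D_1$ with $V(D_i) = Y_i$; since a Cartesian product of connected graphs is connected, $\skel(X \times Y) = (\skel X \cartprod D_0) \sqcup (\skel X \cartprod D_1)$ has exactly two connected components, with vertex sets $V(X) \times Y_0$ and $V(X) \times Y_1$. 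Any $\varphi \in \Aut(X \times Y)$ permutes these two components, and can interchange them only when they have equal size, i.e.\ only when $|Y_0| = |Y_1|$; in that situation our hypothesis furnishes $\tau \in \Aut Y$ with $\tau(Y_0) = Y_1$, and replacing $\varphi$ by $(\mathrm{id}_X \times \tau) \circ \varphi$ reduces matters to the case where $\varphi$ preserves each component, that is, $\varphi(V(X) \times Y_i) = V(X) \times Y_i$ for $i = 0,1$. We assume this henceforth.

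Next we would restrict $\varphi$ to each component. The component with vertex set $V(X) \times Y_i$ is $\skel X \cartprod D_i$, both factors connected, with $|V(X)|$ relatively prime to $|V(D_i)| = |Y_i|$ by hypothesis; so Fact~\fullref{DirFacts}{Aut(cart)} provides $\alpha_i \in \Aut(\skel X)$ and $\beta_i \in \Aut(D_i)$ with $\varphi(x,y) = (\alpha_i(x), \beta_i(y))$ whenever $y \in Y_i$. Because each $\beta_i$ permutes~$Y_i$, the inverse $\varphi^{-1}$ likewise satisfies $\varphi^{-1}(x,y) = (\alpha_i^{-1}(x), \beta_i^{-1}(y))$ for $y \in Y_i$.

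The crucial step is to splice $\alpha_0$ and~$\alpha_1$ together. Fix an edge $y_0 y_1$ of~$Y$ with $y_j \in Y_j$ (one exists because $Y$ has an edge and is bipartite). For any edge $xx'$ of~$X$, the vertices $(x,y_0)$ and $(x',y_1)$ are adjacent in $X \times Y$, so applying the automorphism~$\varphi$ and reading off first coordinates gives $\alpha_0(x) \sim \alpha_1(x')$ in~$X$, while applying $\varphi^{-1}$ the same way gives $\alpha_0^{-1}(x) \sim \alpha_1^{-1}(x')$. Now define $\psi$ on $V(BX) = V(X) \times \{0,1\}$ by $\psi(x,j) = (\alpha_j(x), j)$. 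Since a pair $(x,0),(x',1)$ is an edge of~$BX$ exactly when $x \sim x'$ in~$X$, the two implications just obtained say precisely that $\psi$ and $\psi^{-1}$ both carry edges of~$BX$ to edges of~$BX$; hence $\psi \in \Aut BX$. As $\psi$ keeps the second coordinate fixed, it preserves the two bipartition classes of~$BX$, so the hypothesis $\Aut BX = \Aut X \times S_2$ forces $\psi$ to be of the form $(x,j) \mapsto (\alpha(x), j)$ for some fixed $\alpha \in \Aut X$; comparing with $\psi(x,j) = (\alpha_j(x), j)$ yields $\alpha_0 = \alpha_1 = \alpha$, and $\alpha$ is an automorphism of~$X$, not merely of~$\skel X$.

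It then remains to splice $\beta_0$ and~$\beta_1$. The map $\beta \colon V(Y) \to V(Y)$ agreeing with $\beta_i$ on~$Y_i$ is a bijection (each $\beta_i$ permutes~$Y_i$), and now $\varphi(x,y) = (\alpha(x), \beta(y))$ for all $(x,y)$. Fixing an edge $x_0 x_1$ of~$X$ and repeating the edge-chasing argument --- this time applying $\varphi$ and $\varphi^{-1}$ to the edge $(x_0,y),(x_1,y')$ of $X \times Y$ arising from an arbitrary edge $yy'$ of~$Y$, and reading off second coordinates --- shows that $\beta$ and $\beta^{-1}$ carry edges of~$Y$ to edges of~$Y$, so $\beta \in \Aut Y$. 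Hence $\varphi = \alpha \times \beta \in \Aut X \times \Aut Y$, which together with the first-paragraph reduction proves the proposition. The hard part will be the crucial step of the third paragraph: one must notice that the edges of $X \times Y$ joining the ``$Y_0$-layers'' to the ``$Y_1$-layers'' form an isomorphic copy of~$BX$, so that any discrepancy between $\alpha_0$ and~$\alpha_1$ is governed by $\Aut BX$; granting that observation, the remaining steps are routine bookkeeping with the preceding structural facts.
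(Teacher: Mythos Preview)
Your proof is correct and follows essentially the same route as the paper's: pass to the Cartesian skeleton, use the component structure and hypothesis~\pref{AutBipDirProd-bip-interchange} to reduce to the colour-preserving case, apply Fact~\fullref{DirFacts}{Aut(cart)} on each component to obtain the splittings $(\alpha_i,\beta_i)$, and then use an edge of~$Y$ to exhibit an element of $\Aut BX$ that forces $\alpha_0=\alpha_1$. The only differences are cosmetic: you verify the $BX$-automorphism by direct edge-chasing rather than via the induced subgraphs $B$ and~$B'$, and you spell out explicitly that $\alpha\in\Aut X$ and $\beta\in\Aut Y$, which the paper subsumes under ``this easily implies.''
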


\begin{proof}
Let $\varphi \in \Aut(X \times Y)$. From Fact~\ref{DirFacts-notconn}, we know that $\skel Y$ has two connected components~$C_0$ and~$C_1$, where $V(C_0) = Y_0$ and $V(C_1) = Y_1$. 
By Facts~\ref{DirFacts-natural} and~\ref{DirFacts-S(x)}, we have 
	\[ \varphi \in \Aut \bigl( \skel (X \times Y) \bigr) = \Aut(\skel X \cartprod \skel Y) = \Aut(\skel X \cartprod (C_0 \cup C_1) \bigr) . \]
If $|V(C_0)| = |V(C_1)|$, then (by hypothesis~\pref{AutBipDirProd-bip-interchange}) $Y$ has an automorphism that interchanges $C_1$ and~$C_2$; therefore, we may assume that $\varphi$ fixes each connected component of $\skel X \cartprod \skel Y$. This means that $\varphi$ restricts to an automorphism~$\varphi_i$ of $\skel X \cartprod C_i$ (for $i = 0,1$). Since $\skel X$ is connected (by Fact~\ref{DirFacts-conn}), and the connected component~$C_i$ is obviously also connected, Fact~\ref{DirFacts-Aut(cart)} (and hypothesis~\pref{AutBipDirProd-bip-relprime}) tells us that there exist permutations $\chi_i$ of $V(X)$ and $\eta_i$ of~$Y_i$, such that 
	\[ \text{$\varphi(x,y) = \bigl( \chi_i(x), \eta_i(y) \bigr)$ for all $x \in V(X)$ and $y \in Y_i$} . \]

Choose some edge $(y_0,y_1)$ of~$Y$, with $y_i \in Y_i$, and let $y_i' = \eta_i(y_i)$. Let $B$ and~$B'$ be the subgraphs of $X \times Y$ induced by $V(X) \times \{y_0,y_1\}$ and $V(X) \times \{y_0', y_1' \}$, respectively, so $B' = \varphi(B)$.
By definition of the direct product, the maps $(x,i) \mapsto (x, y_i)$ and $(x,i) \mapsto (x, y_i')$ are isomorphisms from~$BX$ to $B$ and~$B'$. Therefore, the map $(x,i) \mapsto \bigl(\chi_i(x), i \bigr)$ is an automorphism of~$BX$. So Assumption~\pref{AutBipDirProd-AutBx} tells us that $\chi_0 = \chi_1$.
This means that the $V(X)$-component of $\varphi(x,y)$ depends only on~$x$. (And we already knew that the $V(Y)$-component of $\varphi(x,y)$ depends only on~$y$.) This easily implies $\varphi \in \Aut X \times \Aut Y$.
\end{proof}

\begin{proof}[\bf Proof of \cref{CayleyByY}]
Combine \cref{AutNonbipDirProd,AutBipDirProd} (and note that \cref{CayleyByK2} verifies hypothesis~\fullref{AutBipDirProd}{AutBx}).
\end{proof}

\end{document}